\documentclass[11 pt]{amsart}
%PACKAGES
\usepackage{amsmath, amsthm,amssymb,bbm,enumerate,mathrsfs,mathtools}
\usepackage{enumitem,a4wide}
\usepackage[margin=3cm]{geometry} 
\usepackage{tikz,verbatim,xcolor,graphicx}
\usepackage[linktocpage=true]{hyperref}
\usepackage{makecell}
\usepackage{verbatim,bbm,theoremref}
\usepackage{marginnote,caption,subcaption}

\renewcommand\labelenumi{(\roman{enumi})}
\renewcommand\theenumi\labelenumi
\newcommand{\eps}{\ensuremath{\varepsilon}}

% letters 
\def\F{\mathbb{F}}
\def\C{\mathbb{C}}
\def\Z{\mathbb{Z}}

\def\E{\ensuremath{\mathbb{E}}}     % average
\def\Ex{\ensuremath{\boldsymbol{E}}}    % Expectation
 %vectors

% some shortcuts 
\newcommand{\se}{\ensuremath{\subseteq}}
\newcommand{\sm}{\ensuremath{\setminus}}
\DeclareMathOperator{\supp}{supp}
\DeclareMathOperator{\Var}{Var}
\DeclareMathOperator{\spa}{span}
\DeclareMathOperator{\codim}{codim}
\DeclareMathOperator{\sol}{sol}

\title{On the uncommonness of minimal rank-2 systems of linear equations}

\author{Daniel Altman}
\address{Department of Mathematics, University of Michigan, Ann Arbor, U.S.A.}
\email{daniel.h.altman@gmail.com} 

\author{Anita Liebenau}
\address{School of Mathematics and Statistics, UNSW Sydney, NSW 2052, Australia.}
\email{a.liebenau@unsw.edu.au}

\begin{document}

%THEOREM TYPES

\newtheorem{thm}{Theorem}
\numberwithin{thm}{section} 
\newtheorem{lem}[thm]{Lemma}
\newtheorem{prop}[thm]{Proposition}
\newtheorem{obs}[thm]{Observation}
\newtheorem{conj}[thm]{Conjecture}
\newtheorem{cor}[thm]{Corollary}
\newtheorem{claim}[thm]{Claim}
\newtheorem{ques}[thm]{Question}

\maketitle

\begin{abstract}
    We prove that suitably generic pairs of linear equations on an even number of variables are uncommon. This verifies a conjecture of Kam\v{c}ev, Morrison and the second author. Moreover, we prove that any large system containing such a $(2\times k)$-system as a minimal subsystem is uncommon. 
\end{abstract}

\section{Introduction}

Sidorenko's conjecture~\cite{sid93} is one of the biggest open problems in extremal graph theory. Despite much attention~\cite{sid1,sid2,sid3,sid4,sid5,sid6,sid7}, the conjecture remains wide open. The related concept of {\em commonness} has equally received a lot of attention. A graph $F$ is called {\em common} if the number of monochromatic copies of $F$ in $K_n$  is asymptotically minimised by a random colouring. In 1959, Goodman~\cite{goodman59} showed that the triangle is common. This led Erd\H{o}s~\cite{erdos62} to conjecture that $K_4$ is common.  Later, Burr and Rosta~\cite{br80} extended Erd\H{o}s' conjecture to arbitrary graphs. In 1989, the Burr-Rosta conjecture was refuted independently by Sidorenko~\cite{sid89} and by  Thomason~\cite{thomason89} who also refuted Erd\H{o}s' conjecture. In fact, Jagger, Stovicek and Thomason~\cite{jst96} later proved that every graph containing a copy of $K_4$ is uncommon, certifying the falsity of the aforementioned conjectures in a rather strong sense. 

More recently, the arithmetic analogue has received considerable attention. Saad and Wolf~\cite{sw17} initialised the systematic study of common and Sidorenko systems of linear equations. A linear pattern $\Psi = (\psi_1,\ldots, \psi_t)$ which maps $\Z^D \to \Z^t$ is said to be \textit{Sidorenko} if 
\[ T_\Psi(f) := \E_{x_1,\ldots,x_D \in \F_p^n} f(\psi_1(x_1,\ldots,x_D))\cdots f(\psi_t(x_1,\ldots, x_D)) \geq \alpha^t,\]
for all $\alpha \in [0,1]$, for all $n$ and for all $f: \F_p^n \to [0,1]$ with $\E_x f(x) = \alpha$. Here and throughout, we use the $\E$ notation to denote a normalised sum (i.e. a sum normalised by the number of elements in the set over which we sum). That is, the Sidorenko property asks that the number of solutions to $\Psi$ weighted by any $f$ of average $\alpha$ is at least that of the constant function $\alpha$. By sampling a set $A$ randomly according to the function $f$, the Sidorenko property can be viewed as asking that \textit{any} set $A$ contains at least as many copies of the pattern $\Psi$ as what is expected from a random set of the same density. 

The commonness property is similar, but for 2-colourings. A linear pattern $\Psi$ is said to be common if 
\[T_\Psi(f) + T_\Psi(1-f) \geq 2^{1-t},\]
for all $f:\F_p^n \to [0,1]$. Colloquially, it asks that \textit{any} two colouring of $\F_p^n$ contains at least as many monochomatic patterns $\Psi$ as what is expected of a random two colouring. 

While both the Sidorenko property and commonness property for single equation systems (i.e., when the image of $\Psi$ has codimension 1) are fully characterised~\cite{fpz19,sw17}, both general classification problems (i.e., for systems with codimension $> 1$) remain wide open. 

Saad and Wolf showed that the linear pattern corresponding to a four-term arithmetic progression (i.e. $\Psi(x,y) = (x,x+y,x+2y,x+3y)$) is uncommon over $\F_5$. They asked whether every system containing a 4-AP is uncommon, a question that was answered in the affirmative by Kam\v{c}ev, Morrison and the second author~\cite{KLM22}, and independently by Versteegen~\cite{v21}. Here, we say that a $k$-variable system $\Psi$ contains an $\ell$-variable system $\Psi'$ if there exists a subset $B$ of $\ell$ variables such that every solution to $\Psi$ forces the variables in $B$ to be a solution to $\Psi'$.\footnote{Here and throughout we abuse notation and terminology to refer to $\Psi$ interchangeably as both a system of linear forms, and from the dual perspective as a system of linear equations. In this way, we may refer to ``solutions of $\Psi$''.} We give a formal definition in Section~\ref{s:prelims}. 
The result of~\cite{KLM22} proves the more general statement in which a 4-AP may be replaced by any suitably nondegenerate system with $D=2, t=4$. 

In what follows, we may refer to a system $\Psi$ with parameters $D, t$ as a $(t-D)\times t$ system, so  for example a 4-AP is a $2\times 4$ system. We denote by $s(\Psi)$ the length of the shortest equation contained in $\Psi,$ where the {\em length} of an equation is the number of non-zero coefficients. 
Kam\v{c}ev, Morrison and the second author also conjectured the following.

\begin{conj}[Conjecture 6.1 in~\cite{KLM22}]\thlabel{conj:KLM2xk}
    For even $k\ge 6$, any $(2\times k)$-system $\Psi$ with $s(\Psi) = k-1$ is uncommon. 
\end{conj}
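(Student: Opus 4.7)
The plan is to expand the commonness defect at $f=\tfrac12$ in Fourier, use the hypothesis $s(\Psi)=k-1$ to strip the expansion down to a single term, and then exploit a tensor-power reduction to transform the problem into a finite character-sum question on $\F_p^2$.

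First, I would write $f=\tfrac12+g$ with $\E g=0$ and $\|g\|_\infty\le\tfrac12$, giving
\[
T_\Psi(\tfrac12+g)+T_\Psi(\tfrac12-g)-2^{1-k}
= 2\sum_{\substack{S\subseteq[k]\\ 2\le|S|\text{ even}}}(\tfrac12)^{k-|S|}\,M_S(g),
\]
where $M_S(g):=\E_x\prod_{i\in S}g(\psi_i(x))$. A Fourier expansion yields $M_S(g)=\sum_r\prod_{i\in S}\hat g(r_i)$, summed over tuples $r$ with $\sum_{i\in S}r_i\psi_i=0$; such a relation is an equation of $\Psi$ supported in $S$, which $s(\Psi)=k-1$ rules out whenever $|S|\le k-2$, so only $r=0$ remains and is killed by $\hat g(0)=\E g=0$. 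The case $|S|=k-1$ is odd and absent from the sum since $k$ is even, so the defect collapses to $2\,T_\Psi(g)$; hence it suffices to produce one admissible $g$ with $T_\Psi(g)<0$.

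Second, a further Fourier identity gives
\[
T_\Psi(g)=\sum_{(\alpha,\beta)\in(\F_p^n)^2}\prod_{i=1}^k\hat g\bigl(\alpha(E_1)_i+\beta(E_2)_i\bigr),
\]
where $E_1,E_2$ span the 2-dimensional equation space of $\Psi$. Specialising to tensor-product functions $g=g_0^{\otimes n}$ for a real-valued $g_0:\F_p\to\mathbb{R}$ with $\E g_0=0$ makes this factor as $S(\hat g_0)^n$, where
\[
S(h):=\sum_{(a,b)\in\F_p^2}\prod_{i=1}^k h\bigl(v_i\cdot(a,b)\bigr),
\qquad v_i:=((E_1)_i,(E_2)_i)\in\F_p^2.
\]
The hypothesis $s(\Psi)=k-1$ is precisely the geometric statement that $v_1,\ldots,v_k$ are pairwise non-parallel in $\F_p^2$. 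For $n$ odd, the uncommonness of $\Psi$ on $\F_p^n$ thus follows from the existence of such a $g_0$ with $S(\hat g_0)<0$, after a routine rescaling to enforce $\|g\|_\infty\le\tfrac12$.

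The main obstacle is constructing such a $g_0$. Points $(a,b)\in\bigcup_i v_i^\perp$ automatically contribute $0$ since $\hat g_0(0)=0$, leaving the sum to run over the $(p^2-1)-k(p-1)$ ``good'' points at which $\prod_i\hat g_0(v_i\cdot(a,b))$ is a product of $k$ nonzero values of $\hat g_0$. A natural candidate is to take $\hat g_0$ to be a nontrivial real multiplicative character of $\F_p^\times$ (extended by $0$ at $0$), so that $S(\hat g_0)$ becomes a character sum $\sum_{(a,b)}\chi\bigl(P(a,b)\bigr)$ of the squarefree polynomial $P(a,b):=\prod_i v_i\cdot(a,b)$. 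One then needs to argue, via a main-term identification (plausibly using Jacobi sums or a quadratic-twist identity) combined with a Weil-type bound on the remainder, that this sum is strictly negative for suitable $p$ and $\chi$. Securing this uniformly across admissible $(2\times k)$-systems and every even $k\ge 6$, while preventing the main-term sign from cancelling against the Weil error, is the heart of the argument.
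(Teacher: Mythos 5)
Your reduction of the commonness defect at $f=\tfrac12+g$ to the single term $2T_\Psi(g)$ is correct and is essentially the first step of the paper's proof of \thref{thm:main} when specialised to $\Psi'=\Psi$: the support condition $s(\Psi)=k-1$ kills all $M_S(g)$ with $2\le|S|\le k-2$, and the case $|S|=k-1$ is absent because $k$ is even. So, as you say, everything hinges on producing some $g$ with $\E g=0$ and $T_\Psi(g)<0$. This is exactly the content of the paper's \thref{thm:main-2xk-uncommon}, and it is where your proposal diverges from the paper and where it has a genuine gap.

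You specialise to tensor powers $g=g_0^{\otimes n}$ and then to $\hat g_0$ a nontrivial real multiplicative character $\chi$ (the Legendre symbol), reducing to showing $S(\chi)=\sum_{(a,b)\in\F_p^2}\chi\bigl(\prod_i v_i\cdot(a,b)\bigr)<0$. The problem is that the sign of this quantity is not controllable. By homogeneity and $k$ even, $S(\chi)=(p-1)\sum_{[a:b]\in\mathbb{P}^1(\F_p)}\chi(P(a,b))$, and the projective sum is (up to small corrections) a trace of Frobenius on the curve $y^2=P(a,1)$. There is no ``main term'' to isolate: the sum is $O(\sqrt p)$ and of essentially arbitrary sign, with no mechanism forcing negativity. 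Moreover there is almost no freedom to adjust: for a fixed $p$ there is only one nontrivial real multiplicative character, non-real characters don't yield real-valued $g_0$, and $p$ is fixed by the ambient field the system lives over. Concretely, for the $4$-AP over $\F_5$ one finds $S(\chi)=8>0$ (the two nonvanishing projective points $[1{:}1]$ and $[1{:}4]$ both give $\chi=+1$); while this is $k=4$ rather than $k\ge 6$, it illustrates that the sign can go the wrong way and that a Weil bound alone cannot rescue the argument. In short, the step ``$S(\hat g_0)<0$ for a suitable real character'' is not merely unproved but likely false for many systems and primes.

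The paper's route is entirely different and is the missing ingredient: it works with a random function $F$ on $\F_p^n$ whose Fourier coefficients $\hat F(h)$ for $h\succ 0$ are i.i.d.\ uniform on $S^1$ (with $\hat F(-h)=\overline{\hat F(h)}$ and $\hat F(0)=0$), splits $T_\Psi(F)=P(\hat F)+Q(\hat F)$ according to whether $(r,s)$ are linearly independent, bounds $|Q|=O(p^n)$, and shows via a martingale central limit theorem (McLeish) that $P(\hat F)/\sqrt{\Var P(\hat F)}$ is asymptotically standard normal with $\Var P(\hat F)=\Omega(p^{2n})$. Since a centred Gaussian puts positive mass on any left tail, $\mathbb{P}(T_\Psi(F)<0)>0$ for $n$ large, and a witness exists. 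This probabilistic concentration argument is what replaces the sign control you were hoping to get from a character sum, and is the key idea your proposal is missing.
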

As mentioned above, the case $k=4$ is proved in~\cite{KLM22}. The assumption for $k$ to be even is necessary, since some such systems with $k$ odd are known to be common, see~\cite{KLM23} for details. These $(2\times k)$-systems with $s(\Psi)=k-1$ are important since, by localising (see \cite{A22-localSid}), they can be used to prove uncommonness of larger systems containing them. We provide the details in Section~\ref{s:prelims}. 

In this paper, we prove \thref{conj:KLM2xk}. Moreover, using methods that the first author developed in~\cite{A22-localSid}, we are able to generalise~\cite{KLM22} to larger systems. 

\begin{thm}\thlabel{thm:main}
Let $p$ be a large enough prime, 
let $k \geq 4$ be even. Let $\Psi$ be a $2\times k$ system such that $s(\Psi)=k-1.$
Let $m\ge 2$, $t\geq k$. If $\Psi'$ is an $m\times t$ system that has $s(\Psi') = k-1$ and that contains $\Psi$, then $\Psi'$ is uncommon. 
\end{thm}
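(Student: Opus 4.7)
My plan for Theorem~\thref{thm:main} has two stages: first, I would establish Conjecture~\thref{conj:KLM2xk} (which is the case $\Psi'=\Psi$); second, I would bootstrap to arbitrary $\Psi'$ containing $\Psi$ via the localisation method of \cite{A22-localSid}.

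For Stage~1, I would use the standard variational approach: write $f = \tfrac12 + \eps g$ for a mean-zero $g\colon\F_p^n\to\mathbb{R}$ and small $\eps$. The $f\leftrightarrow 1-f$ symmetry kills odd-order terms, giving
\[T_\Psi(f)+T_\Psi(1-f) \;=\; 2^{1-k} \;+\; 2\sum_{\substack{S\subseteq[k]\\ |S|\ge 2,\; |S|\text{ even}}} 2^{|S|-k}\,\eps^{|S|}\,\Lambda_S(g),\]
where $\Lambda_S(g) = \E_x\prod_{i\in S}g(\psi_i(x))$. A Fourier expansion reveals that $\Lambda_S(g)\neq 0$ only if there is a nonzero equation of $\Psi$ with support contained in $S$. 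Since $s(\Psi)=k-1$, the only such $S$ are $[k]$ and the $(k-1)$-subsets $[k]\setminus\{i\}$. For even $k$, the first has even cardinality while the second has odd cardinality, so the unique surviving even-order term is $\Lambda_{[k]}(g)$; conversely, for odd $k$ only the $\Lambda_{[k]\setminus\{i\}}(g)$'s would survive, and these $(k-1)$-fold averages can be forced non-negative by Fourier-analytic arguments, in line with the commonness of certain odd-$k$ minimal systems in \cite{KLM23}. Thus Conjecture~\thref{conj:KLM2xk} reduces to exhibiting $g$ with $\Lambda_{[k]}(g)<0$. Taking $\widehat g$ supported on a single frequency pair $\{\pm v\}$ expresses $\Lambda_{[k]}(g)$ as a signed count over $K\cap\{\pm 1\}^k$, where $K\subseteq\F_p^k$ is the 2-dimensional row space of the coefficient matrix of $\Psi$; a careful choice of complex character on $\{\pm v\}$, combined with a structural analysis of $K$ (necessarily exploiting $k$ even), should furnish the desired negativity.

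For Stage~2, assume Stage~1 yields a witness $f_0=\tfrac12+\eps_0 g_0$ for uncommonness of $\Psi$, and let $B\subseteq[t]$ index the coordinate subset on which $\Psi'$ contains $\Psi$. Following \cite{A22-localSid}, I would apply the same Fourier expansion to $T_{\Psi'}(F)+T_{\Psi'}(1-F)$ using a localised variant $F$ of $f_0$ supported on the $B$-coordinates (and equal to $\tfrac12$ on an auxiliary subspace elsewhere). By the same reduction as in Stage~1, the surviving even-order $\Lambda^{\Psi'}_{S'}(g_0)$ terms correspond to subsets $S'\subseteq[t]$ supporting equations of $\Psi'$; because $s(\Psi')=k-1=s(\Psi)$ and $\Psi'\supseteq\Psi$ on $B$, the minimal such supports are copies of $[k]$ inside $B$, and the leading correction reduces to $\Lambda_{[k]}^\Psi(g_0)<0$ after integrating out the $[t]\setminus B$ coordinates (contributing the trivial factor~$1$). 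Higher-support equations of $\Psi'$ produce higher-order contributions in $\eps$ which, by choosing $\eps$ sufficiently small, are dominated by the leading negativity, yielding $T_{\Psi'}(F)+T_{\Psi'}(1-F)<2^{1-t}$.

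The main obstacle is Stage~1: constructing a Fourier test function $g$ witnessing $\Lambda_{[k]}(g)<0$ \emph{uniformly} across all minimal $(2\times k)$-systems with even $k\ge 6$. The combinatorics of $K\cap\{\pm 1\}^k$ varies sensitively with the coefficient matrix, so the argument likely requires either a case analysis over row-space types (in particular, how many length-$k$ equations with $\pm 1$-coefficients the system admits) or a more flexible test function, e.g.\ with $\widehat g$ supported on multiple frequency pairs so as to probe all of $K$ rather than just its $\pm 1$-skeleton. Stage~2, in contrast, should be essentially mechanical once Stage~1 is in hand, modulo routine bookkeeping of the $[t]\setminus B$ error terms within the localisation framework.
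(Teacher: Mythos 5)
Your two-stage architecture (prove the minimal $2\times k$ case, then bootstrap via localisation/tensoring) matches the paper's plan, and your multilinear reduction showing that the only surviving even-order term in the $2\times k$ case is $\Lambda_{[k]}(g) = T_\Psi(g)$ is correct. However, there are two genuine gaps, one of which is the crux of the whole theorem.

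\textbf{Stage 1 has no proof.} You correctly reduce Conjecture~\thref{conj:KLM2xk} to exhibiting a mean-zero $g$ with $T_\Psi(g)<0$, but the construction you sketch --- $\widehat g$ supported on a single conjugate pair $\{\pm v\}$ --- cannot work in general, and you acknowledge you do not know how to fix it. This single-frequency (or few-frequency) ansatz is essentially what the paper does in \thref{thm:noAddKTuple}, where $\widehat F$ is supported on $M_0\cup -M_0$; but that argument explicitly requires $\Psi$ to contain no additive $k$-tuple, precisely because an additive $k$-tuple creates a cancelling partition $M(r,s)$ along a one-dimensional subspace that wrecks the expectation computation. Your proposed ``case analysis over row-space types'' or ``multiple frequency pairs'' is not a plan, it is the open problem. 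The paper's actual proof of \thref{thm:main-2xk-uncommon} abandons structured test functions entirely: it takes $\widehat F$ to be fully random (i.i.d.\ uniform on $S^1$ at each conjugate-pair, supported on all of $\F_p^n\setminus\{0\}$), decomposes $T_\Psi(F)$ into a dominant piece $P(\widehat F)$ over linearly independent $(r,s)$ plus an $O(p^n)$ remainder, and shows via McLeish's martingale central limit theorem that $P(\widehat F)/\sqrt{\Var P(\widehat F)}\xrightarrow{d}\mathcal N(0,1)$ with $\Var P(\widehat F)=\Omega(p^{2n})$, so $T_\Psi(F)<0$ with positive probability. This CLT mechanism, together with the combinatorial pairing estimates verifying the Lindeberg-type conditions (M1) and (M2), is the genuine novelty of the paper and is entirely absent from your proposal.

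\textbf{Stage 2 does not close either.} You say that choosing $\eps$ small suppresses ``higher-order'' contributions, leaving the leading negativity from $\Lambda_{[k]}^\Psi(g_0)$. But several distinct subsets $S'\subseteq[t]$ with $|S'|=k$ contribute at the \emph{same} order $\eps^k$: among these, some give codimension-$2$ restrictions (like $B$, and possibly other copies of size-$k$ minimal subsystems), and some give codimension-$1$ restrictions. No choice of $\eps$ separates these. The paper handles this with two additional devices. First, a distinction lemma (\thref{l:one-bigger}) picks out a function $f_0$ for which one of the codimension-$2$ restrictions $\hat\Psi$ strictly dominates the others in magnitude, and tensoring $f_0^{\otimes A}\otimes f_1$ (with $f_1$ the witness for $\hat\Psi$ from \thref{thm:main-2xk-uncommon}) forces $\sum_{S\in\mathcal S} T_{\Psi'|_S}(f_2)<0$ over the whole set $\mathcal S$ of codim-$2$, size-$k$ restrictions. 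Second, tensoring with the indicator $1_0:\F_p^B\to\{0,1\}$ introduces a factor $p^{-B(|S|+\deg(\Psi'|_S))}$ that makes $\deg(\Psi'|_S)+|S|$ the effective grading, and the paper verifies that the minimisers of this quantity are exactly $\mathcal S$. Your proposal neither addresses the multiplicity of size-$k$ codim-$2$ subsets nor the coexistence of codim-$1$ and codim-$2$ restrictions at the same value of $|S|$, so the ``routine bookkeeping'' you expect is in fact a substantive part of the argument.
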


By taking $\Psi' = \Psi,$ we see that in particular that \thref{conj:KLM2xk} is proved. We remark that an independent proof of \thref{thm:main} has been announced by Dong, Li and Zhao \cite{DLZ24}. \newline

{\bf{Acknowledgments.}} We thank Sarah Peluse for valuable feedback on an earlier version of this paper. The second author is partially supported by the Australian Research Council DP220103074.

\section{Preliminaries}\label{s:prelims}

Throughout this document we assume that $p$ is an odd prime, and all asymptotic notation will be implicitly with respect to the limit $n\to \infty$.

Let $\Psi$ be a system of $m$ linear equations in $t$ variables, described by an $m\times t$ matrix $M_{\Psi}$ with entries in $\F_p$. 
For $n\ge 1,$ we denote by $\sol(\Psi;\F_p^n)$ the set of $x\in (\F_p^n)^t$ that are solutions to the homogeneous system of equations $M_\Psi x =0.$ 
For a function $f : \F_p^n \to \C$ recall the solution density $T_{\Psi}(f)$ and note that this may equivalently be defined by
$$T_{\Psi}(f) = \E_{x\in\sol(\Psi;\F_p^n)} f(x_1)f(x_2) \cdots f(x_t),$$ 
by adapting the dual view on the solution space. 
We always assume that $t>m$ and that the system is of full rank. In particular, $\Psi$ has nontrivial solutions. 
We use the Fourier transform on $\F_p^n$. Let $e_p(\cdot)$ be shorthand for $e^{2\pi i/p}$. For $h\in F_p^n$, we
define
$$\hat{f}(h) := \E_{x\in\F_p^n} f(x) e_p(x\cdot h).$$ 
We shall use the following identity which can be readily obtained by Fourier inversion. For any $f:\F_p^n \to [-1,1]$  we have 
\begin{equation}\label{e:tpsi-fi} T_\Psi(f) = \sum_{r,s \in \F_p^n} \prod_{i=1}^k \hat f(a_i r + b_i s).
\end{equation}

A multiset $W\se \F_p^n$ is said {\em to form a cancelling partition} if the elements of $W$ be indexed as $W=\{v_1,\ldots,v_{|W|/2},w_1,\ldots,w_{|W|/2}\}$ such that $v_i+w_i=0$ for all $1\le i\le |W|/2.$ 

Finally, define the tensor product of $f_1:\F_p^{n_1} \to \C$ and $f_2:\F_p^{n_2}\to \C$ as follows: let $f_1\otimes f_2:\F_p^{n_1 + n_2}\to \C$ be given by $f((x_1,x_2)) = f_1(x_1)f_2(x_2)$, where $x_i \in \F_p^{n_i}$ and we concatenate vectors in the obvious way. It is easily checked that the tensor product is multiplicative in the following sense. For any $\Psi$, we have  $T_\Psi(f_1\otimes f_2)=T_\Psi(f_1)T_\Psi(f_2).$

\section{Main section}

We prove \thref{thm:main} in two steps. Firstly, we prove that for every 
$(2\times k)$-system $\Psi$ with $s(\Psi)=k-1$ there is a function $f_{\Psi}$ with $\E f_{\Psi}=0$ such that $T_{\Psi}(f_{\Psi}) < 0.$ This already implies that any such $\Psi$ is uncommon. We then use the idea of tensoring to find a function $f$ that 
witnesses uncommonness of a system $\Psi'$ containing such $\Psi$ as a minimal subsystem.

We first give a short proof of the fact that a $2\times k$ system $\Psi$ with $s(\Psi)=k-1$ is uncommon under the additional assumption that $\Psi$ does not contain an additive $k$-tuple.

\begin{thm}\thlabel{thm:noAddKTuple}
Let $p>2$ be a prime and let $\Psi$ be a $2\times k$ system such that $s(\Psi)=k-1$ 
and which does not contain an additive $k$-tuple. Then there exists $f:\F_p^2 \to [-1,1]$ with $\E f = 0$ and such that $T_\Psi(f) < 0$.
\end{thm}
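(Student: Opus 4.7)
My plan is to construct an explicit real-valued function $f:\F_p^2\to[-1,1]$ with $\E f=0$ and $T_\Psi(f)<0$ via Fourier analysis. Since $T_\Psi$ is homogeneous of degree $k$ in $f$, it suffices to exhibit any real-valued mean-zero $f$ on $\F_p^2$ with $T_\Psi(f)<0$, then rescale into $[-1,1]$.

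Write $v_i=(a_i,b_i)\in\F_p^2$ for the columns of $M_\Psi$ and apply the identity
\[
T_\Psi(f)=\sum_{r,s\in\F_p^2}\prod_{i=1}^k\hat f(a_ir+b_is).
\]
I would choose $\hat f$ supported on a small symmetric subset $S=-S\subseteq\F_p^2\setminus\{0\}$ with $\hat f(-v)=\overline{\hat f(v)}$ (so $f$ is real) and $0\notin S$ (so $\E f=0$). Under $s(\Psi)=k-1$, the vectors $v_i$ are pairwise non-parallel in $\F_p^2$, so any two of the constraints $a_ir+b_is=\sigma_i$ uniquely determine $(r,s)\in(\F_p^2)^2$. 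Therefore $T_\Psi(f)$ collapses to a finite signed count indexed by those sign patterns $\sigma=(\sigma_i)\in S^k$ which are \emph{consistent}, in the sense that all $k$ equations $a_ir+b_is=\sigma_i$ admit a simultaneous solution $(r,s)$; this consistency is a linear condition cutting out a specific subset of $S^k$ via the row span of $M_\Psi$.

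The heart of the proof is to choose $S$ and the weights $\hat f|_S$ so that this signed count is negative. A first attempt with $S=\{\pm u\}$ for a single vector $u\neq 0$ gives only a nonnegative count, so one must pass to a richer support like $S=\{\pm u,\pm w\}$ for linearly independent $u,w$, with signed weights $\hat f(\pm u)=\alpha$ and $\hat f(\pm w)=-\beta$ for well-chosen reals $\alpha,\beta>0$. The resulting signed count then partitions the consistent patterns according to $|B(\sigma)|:=|\{i:\sigma_i\in\{\pm w\}\}|$, with the contribution of $\sigma$ being $\alpha^{k-|B(\sigma)|}(-\beta)^{|B(\sigma)|}$, so the patterns with odd $|B(\sigma)|$ push the sum negative.

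The no-additive-$k$-tuple hypothesis enters precisely here: it should be used to rule out the "pure-type" patterns (with $|B(\sigma)|=0$ or $|B(\sigma)|=k$) that would otherwise contribute unavoidable positive mass coming from a $\pm1$-relation implied by $\Psi$. Once those patterns are excluded, the remaining mixed patterns can be arranged, by a suitable choice of $u,w$ (possibly via an averaging argument over generic $u,w\in\F_p^2$), to make the signed sum strictly negative.

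The main obstacle I expect is exactly this enumeration-and-sign step: converting the structural hypothesis that $\Psi$ contains no additive $k$-tuple into a concrete bound on consistent $S^k$-patterns, and then verifying that some explicit choice of $u,w,\alpha,\beta$ makes the resulting signed sum negative. This is a delicate linear-algebraic count, and I would expect the argument to rely on averaging over parameters in $\F_p^2$ to dodge coincidental positive configurations.
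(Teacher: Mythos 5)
Your proposal diverges from the paper's proof in an essential way, and the divergence creates a fatal gap. The paper does not use real Fourier weights on a small support; it takes $\hat F$ supported on the full $2k$-element multiset $M_0\cup -M_0$ where $M_0=M(r_0,s_0)$ for a fixed linearly independent pair $(r_0,s_0)$, assigns \emph{independent uniformly random phases} $\xi_h\in S^1$ to each $h\in M_0$ (with $\hat F(-h)=\overline{\hat F(h)}$), and argues probabilistically: the no-additive-$k$-tuple hypothesis (together with $s(\Psi)=k-1$) guarantees that no $M(r,s)\subseteq M_0\cup -M_0$ forms a cancelling partition, hence $\Ex_F T_\Psi(F)=0$, while with positive probability all the phases are close to $1$ and $T_\Psi(F)>0$; since the mean is zero, some instance must satisfy $T_\Psi(f)<0$. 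This is genuinely non-constructive — the negative instance is not exhibited.

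The gap in your approach is the support size. With $S=\{\pm u,\pm w\}$ you have $|S|=4$, and this is too small to produce the ``mixed'' patterns you need. Indeed, if $(r,s)$ is linearly independent, the $k$ elements of $M(r,s)$ are pairwise distinct \emph{and}, by the full-rank $2\times 2$ minor condition (and $p>2$), satisfy $M(r,s)\cap -M(r,s)=\emptyset$; so $M(r,s)$ cannot fit inside a symmetric $4$-element set once $k\geq 3$. Thus every pair $(r,s)$ contributing to your collapsed sum has $r,s$ linearly dependent, in which case $M(r,s)$ lies on a single line and hence in $\{\pm u\}$ or $\{\pm w\}$ (or the contribution vanishes). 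Those are exactly your ``pure'' patterns, contributing $\alpha^k\geq 0$ or $(-\beta)^k=\beta^k\geq 0$ when $k$ is even. So the patterns with odd $|B(\sigma)|$ that were supposed to drive the sum negative simply do not exist, and $T_\Psi(f)\geq 0$ no matter how you choose $u,w,\alpha,\beta$. Averaging over $u,w$ cannot rescue this: the obstruction is combinatorial, not a coincidence of a bad choice. Moreover, you have the role of the no-additive-$k$-tuple hypothesis backwards: in the paper it is used to \emph{eliminate} the degenerate (one-line) contributions, not to make them help you, and once they are eliminated the support $\{\pm u,\pm w\}$ would give $T_\Psi(f)=0$ exactly. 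To make an approach in this spirit work you must take $\supp(\hat f)$ of size $2k$ and keep the one consistent nondegenerate pattern $M_0$, which is precisely what the paper does — and then the sign structure is no longer controllable with a handful of real weights, which is why the paper passes to random unit-modulus phases and a mean-zero argument.
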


In the proof of \thref{thm:noAddKTuple} and the more general version below, we will define a random function $F$. For a random variable $X$ defined by $F,$ we denote by $\Ex_{F} X_F$ the expectation of $X$ with respect to this random function. 

\begin{proof}[Proof of \thref{thm:noAddKTuple}]
We first note that the condition $s(\Psi)=k-1$ is equivalent to the fact that every $2\times 2$ minor of $M_\Psi$ has full rank.  
Let $a_1,\ldots,a_k,b_1,\ldots,b_k\in\F_p$ such that \[M_\Psi := \begin{pmatrix} a_1 & a_2 & \cdots & a_k \\
b_1 & b_2 & \cdots & b_k\end{pmatrix}.\] 
Recall by Fourier inversion that for any $f:\F_p^2 \to [-1,1]$ 
\begin{equation}\label{e:tpsi-fi} T_\Psi(f) = \sum_{r,s \in \F_p^2} \prod_{i=1}^k \hat f(a_i r + b_i s).
\end{equation}
For $r,s \in \F_p^2$, define the multiset $M(r,s):= \{a_ir + b_is : i \in [k] \}$. Let $r_0, s_0$ be any two linearly independent vectors in $\F_p^2$ and let $M_0 := M(r_0,s_0)$. It is easily checked using the linear independence of $r_0$ and $s_0$ and the rank condition of $M_\Psi$ that $0\not \in M_0$, that $|M_0| =k$ and that $M_0 \cap -M_0 = \emptyset$. 

We will identify an $f$ as stated in the theorem by sampling functions $F$ randomly and showing that with positive probability, there exists an instance which satisfies the desired conditions. In fact, we will specify the Fourier transform $\hat F$ of $F$. We will choose $\hat F$ to be supported on $M_0 \cup -M_0$. For every $h \in M_0$, let $\hat F(h) = {\xi_h}/{2k}$, where $\xi_h$ is chosen uniformly at random (and independently from previous choices) on $S^1\subset \C$, and let $\hat F(-h) := \overline{\hat F(h)}$. Let $\hat F(h)$ be zero elsewhere. By our observations above, $\hat F$ and hence $F$ is well-defined. Also note by Fourier inversion and the triangle inequality that $\|F\|_\infty \leq 1$ and furthermore that $F$ is real-valued, so indeed $F$ takes values in $[-1,1]$. Furthermore, $0 \not \in \supp{\hat F}$ and so $\E F = 0$.

\begin{claim}
For $r,s\in\F_p^2,$ if $M(r,s)\se M_0\cup -M_0$ then $M(r,s)$ does not form a cancelling partition. 
\end{claim}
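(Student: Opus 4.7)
The approach is to suppose for contradiction that $M(r,s) \subseteq M_0 \cup -M_0$ yet $M(r,s)$ forms a cancelling partition, and split on whether $r, s$ are linearly independent. Throughout I would use the observation (already made at the start of the theorem's proof) that $s(\Psi) = k - 1$ is equivalent to pairwise nonproportionality of the columns $v_i = (a_i, b_i)$ of $M_\Psi$. Consequently, the $2k$ vectors in $M_0 \cup -M_0$ form $k$ antipodal pairs, each lying on a distinct line through the origin in $\F_p^2$.

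\textbf{Case 1: $r, s$ are linearly independent.} A cancelling partition yields a perfect matching of $[k]$ into pairs $\{i, j\}$ with $(a_i + a_j) r + (b_i + b_j) s = 0$. By linear independence this forces $v_i + v_j = 0$, i.e.\ $v_j = -v_i$ with $i \neq j$, directly contradicting pairwise nonproportionality. Note that this case does not invoke the no-additive-$k$-tuple hypothesis.

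\textbf{Case 2: $r, s$ are linearly dependent.} Write $r = \alpha w$, $s = \beta w$ for some nonzero $w \in \F_p^2$ and some $(\alpha, \beta) \neq (0,0)$ (the case $r = s = 0$ is ruled out because $M(r,s) = \{0\}$ is disjoint from $M_0 \cup -M_0$, as $0 \notin M_0 \cup -M_0$). Then $a_i r + b_i s = (a_i \alpha + b_i \beta) w$, and by the antipodal-pair observation, $\spa(w)$ meets $M_0 \cup -M_0$ in at most one pair $\{\pm (a_{\ell^*} r_0 + b_{\ell^*} s_0)\}$, which must in fact contain $M(r,s)$. Choosing $c \in \F_p^*$ with $c w = a_{\ell^*} r_0 + b_{\ell^*} s_0$, each scalar $a_i \alpha + b_i \beta$ equals $\pm c$ (in particular all are nonzero), and the cancelling partition forces exactly $k/2$ of each sign. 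Scaling by $c^{-1}$, the row $\alpha \cdot \text{row}_1(M_\Psi) + \beta \cdot \text{row}_2(M_\Psi)$ becomes a $\{\pm 1\}$-valued vector with $k/2$ of each sign, so the equation $\sum_{i \in S} x_i - \sum_{j \in T} x_j = 0$ for some partition $S \sqcup T = [k]$ with $|S| = |T| = k/2$ is implied by $\Psi$. This is precisely an additive $k$-tuple relation, contradicting the hypothesis.

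The main obstacle is the bookkeeping in Case 2: verifying that the scalars $a_i \alpha + b_i \beta$ are all nonzero (forced by $0 \notin M_0 \cup -M_0$) and converting the sign pattern produced by the cancelling partition into a legitimate $\pm 1$ equation in the row space of $M_\Psi$ that one recognises as the forbidden additive $k$-tuple. Case 1 is essentially immediate from the column-nonproportionality that follows from $s(\Psi) = k-1$.
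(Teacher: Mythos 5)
Your proof is correct and takes essentially the same route as the paper: split on linear independence of $(r,s)$; in the independent case use the $2\times 2$-minor rank condition to rule out $v_i=-v_j$; in the dependent case observe that $M_0\cup -M_0$ meets any line through the origin in at most one antipodal pair, so a cancelling partition on $M(r,s)$ forces a $\pm 1$ combination of the rows, i.e.\ an additive $k$-tuple. Your parametrisation $r=\alpha w,\,s=\beta w$ is a slightly more careful treatment of the degenerate subcases ($r=0$ or $s=0$) than the paper's ``$s=\lambda r$'', but the argument is the same.
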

\begin{proof}
If $r,s$ are linearly independent, we observe that if $a_ir+b_is = -a_j r -b_js$ then $a_i=-a_j$ and $b_i=-b_j$ which is impossible by the condition on the ranks of the $2\times 2$ minors of $M_\Psi$ and the fact that $p\ne 2$.

Next suppose that $s = \lambda r$ for some $\lambda \in \F_p$, so that $\{a_ir +  b_is\}_{i=1}^k$ lies in a one-dimensional subspace. Furthermore, if $a_ir_0 + b_is_0 = c\cdot \pm (a_j r_0 + b_j s_0)$ for some $c\in \F_p$ then by the linear independence of $r_0,s_0$ and the rank condition we see that $c=\pm 1$ and $i=j$; that is, $M_0\cup -M_0$ contains at most two points (which must sum to zero) from any one dimensional subspace of $\F_p^2$. Therefore, $M(r,s)$ must be contained in $\{\pm (a_ir_0 + b_is_0) \}$ for some particular $i$. Therefore, if $M(r,s) = \{(a_i + \lambda b_i)r\}_{i=1}^k$ forms a cancelling partition, it must do so on only two values, and so by adding $\lambda$ times the second row of $M_\Psi$ to the first, we obtain a scalar multiple of an additive $k$-tuple, a contradiction to the assumption on $M_\Psi$.
\end{proof}

By definition, $\supp(\hat F) = M_0\cup -M_0,$ and so continuing from (\ref{e:tpsi-fi}), we see
\[ T_\Psi(F) = \sum_{r,s:M(r,s)\subseteq M_0 \cup -M_0}\prod_{i=1}^k \hat F(a_i r + b_i s).\] 
By the previous claim, if $M(r,s)\subseteq M_0 \cup -M_0$ then it does not form a cancelling partition. Thus, there is an element $h\in M(r,s)$ such that $\hat F(h)$ is chosen uniformly and independent of $M(r,s)\sm\{h\}$. It follows that every product in the sum is uniformly distributed on $S^1$. 

Thus, we have

\[ \Ex_F T_\Psi(F) = \sum_{r,s:M(r,s)\subseteq M_0 \cup -M_0}\Ex_F\prod_{i=1}^k \hat F(a_i r + b_i s)=0.\]

Let $\eps>0$. With positive probability depending on $\eps$, $|\xi_h-1|<\eps$ for all $h \in M_0\cup -M_0$ and so $\Re \left( \prod_{i=1}^k \hat F(a_i r + b_i s) \right)\geq 1/2$ \textit{simultaneously} for all $r,s$ such that $M(r,s) \subseteq M_0 \cup-M_0$ (note that at least one pair $(r,s)=(r_0,s_0)$ exists so that $M(r,s) \subseteq M_0 \cup-M_0$). Thus, with positive probability, 
\[ T_\Psi(F) = \Re(T_\Psi(F)) = \sum_{r,s:M(r,s)\subseteq M_0 \cup -M_0}\Re\left( \prod_{i=1}^k \hat F(a_i r + b_i s)\right) >0.\]
Therefore, since $\Ex_F T_\Psi(F) = 0$, there must exist a particular instance $f$ such that $T_\Psi(f)<0$, completing the proof.
\end{proof}

We next prove the following which is the main novelty of this paper. 

\begin{thm}\thlabel{thm:main-2xk-uncommon}
Let $p>2$ be prime and let $k \geq 3$. Let $\Psi$ be a $2\times k$ system such that $s(\Psi)=k-1$.
%every $2\times 2$ minor of $M_\Psi$ has full rank. 
Then there exists an integer $n >0$ and a function $f: \F_p^n \to [-1,1]$ with $\E f = 0$ such that $T_\Psi(f) < 0$.
\end{thm}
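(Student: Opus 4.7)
My plan is to split on whether $\Psi$ contains an additive $k$-tuple. When it does not --- which in particular covers all $\Psi$ with $k$ odd, since a row of exactly half $+1$'s and half $-1$'s is impossible for odd $k$ --- I would simply invoke \thref{thm:noAddKTuple} (with $n=2$). The remaining case, where $\Psi$ contains an additive $k$-tuple, is the main novelty and requires a more elaborate construction.

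The first step in that case is to diagnose precisely why the proof of \thref{thm:noAddKTuple} breaks. The obstruction is exactly the additive $k$-tuple: for each $\lambda$ such that $(a_i+\lambda b_i)_{i=1}^k$ is a $\pm\mu$-vector of equal multiplicity and each $r$ with $\mu r\in\supp(\hat F)$, the conjugate-pairing constraint $\hat F(-h)=\overline{\hat F(h)}$ forces $\prod_i \hat F((a_i+\lambda b_i)r)=(2k)^{-k}\cdot|\xi_h|^k=(2k)^{-k}$, which is strictly positive independently of the random phases. These ``bad'' contributions make $\Ex_F T_\Psi(F)$ strictly positive, so the zero-expectation trick used in \thref{thm:noAddKTuple} is unavailable.

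To remedy this, I plan to work in a higher ambient dimension $n$ and to enlarge the Fourier support of $F$ beyond a single $M_0\cup -M_0$: for instance, by taking $\hat F$ supported on $\bigcup_j (M_0^{(j)}\cup -M_0^{(j)})$ for several basepoints $(r_0^{(j)},s_0^{(j)})\in\F_p^n$ chosen in generic position (in pairwise-independent two-dimensional subspaces), and by assigning carefully chosen magnitudes together with independent --- or, if needed, correlated --- random phases. The aim is either to arrange $\Ex_F T_\Psi(F)=0$ so the symmetry argument of \thref{thm:noAddKTuple} can be repeated verbatim, or else to run a second-moment calculation showing $\Var_F T_\Psi(F)$ dominates $(\Ex_F T_\Psi(F))^2$, forcing a realization with $T_\Psi(F)<0$. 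The crux is that individual phase choices are powerless against the bad terms, so any successful construction must rely on support design, magnitude choices, or inter-basepoint phase coupling. Completing the combinatorial bookkeeping of which $(r,s)$ produce $M(r,s)\subseteq\supp(\hat F)$ in the enlarged setting, and verifying that the new structure indeed offsets the additive $k$-tuple contributions via the $s(\Psi)=k-1$ hypothesis, is the main obstacle.
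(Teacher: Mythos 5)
Your high-level diagnosis is right --- the additive $k$-tuple (which requires $k$ even) is exactly what forces $\Ex_F T_\Psi(F)>0$ for the support used in \thref{thm:noAddKTuple}, so the zero-mean symmetry trick is lost and the Fourier support must be enlarged. But both of the exit strategies you propose for the hard case have gaps.

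First, the second-moment heuristic is not logically sufficient: knowing $\Var_F T_\Psi(F) > (\Ex_F T_\Psi(F))^2$ does not force a negative realization. (A nonnegative random variable can easily have variance exceeding the square of its mean --- e.g.\ a Bernoulli with parameter $p<1/2$ --- so $T_\Psi(F)\ge 0$ almost surely is perfectly compatible with your variance condition.) What is actually required is an \emph{anticoncentration} statement: a lower bound on $\mathbb{P}(T_\Psi(F)<0)$. The paper achieves this by proving that the ``good'' part of $T_\Psi(F)$ is asymptotically Gaussian via a martingale central limit theorem (McLeish), which is a genuinely different and heavier tool than Chebyshev/second moment.

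Second, your other option --- engineering $\Ex_F T_\Psi(F)=0$ by support or magnitude design --- is working towards the wrong target. The paper never tries to kill the positive contributions. Instead, it takes $\hat F$ supported on \emph{all} of $\F_p^n\setminus\{0\}$ (uniform unit-modulus phases, conjugate-paired) rather than on finitely many $M_0^{(j)}\cup -M_0^{(j)}$, and splits
$T_\Psi(F)=P(\hat F)+Q(\hat F)$
where $P$ sums over linearly independent $(r,s)$ and $Q$ over linearly dependent $(r,s)$. The bad contributions you identify all live in $Q$, and are bounded \emph{deterministically} by $O(p^n)$ simply because there are only $O(p^n)$ such pairs; meanwhile $\Ex P=0$ (by the $s(\Psi)=k-1$ rank condition), $\Var P = \Omega(p^{2n})$, and $P/\sqrt{\Var P}\to\mathcal N(0,1)$ by decomposing $P=\sum_{h\succ 0}X_h$ with $X_h$ indexed by the maximal element of $M(r,s)$ and applying McLeish's CLT for martingale difference arrays. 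With a standard deviation of order $p^n$ and a Gaussian limit, $P$ drops below $-cp^n$ with probability bounded away from $0$, which swamps the deterministic $O(p^n)$ bound on $Q$. This ``accept the bias, beat it with Gaussian fluctuations'' structure is the key idea your proposal is missing, and it is unclear that the multi-basepoint support you suggest would give the needed variance separation between the good and bad terms, or how you would prove anticoncentration without a CLT-type input.

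As a minor point, the paper splits on the parity of $k$ (with a short separate argument for odd $k$ that does not use \thref{thm:noAddKTuple}), rather than on the presence of an additive $k$-tuple; your split is also legitimate for the easy case, but for $k$ even you cannot in general avoid the CLT argument even when there is no additive $k$-tuple and $n$ is forced to grow, since \thref{thm:noAddKTuple} already covers that case at $n=2$.
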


In the proof we make use, apart from the generalised Fourier inversion formula, of a central limit theorem by McLeish for martingale difference arrays. An array $\{Y_{n,i} \mid i =1,\ldots, k_n\}$ of discrete random variables is called a {\em martingale difference array} if for all $n$ and $i$, $\E (Y_{n,i} \mid \{ Y_{n,j} : 1\le j <i\} ) =0.$ 

\begin{thm}[{McLeish~\cite[Corollary 2.13]{M74}}]\thlabel{mcleish}
    Let $\{Y_{n,i}\}$ be a martingale difference array normalised by its variance such that 
    \begin{itemize}
    \item[(M1)] $\lim_{n\to \infty} \sum_{i=1}^{k_n} \Ex\left(Y_{n,i}^2 1_{|Y_{n,i}|> \varepsilon} \right) = 0$ for every $\varepsilon > 0$, and
    \item[(M2)] $\limsup_{n \to \infty} \sum_{i\ne j} \Ex \left( Y_{n,i}^2Y_{n,j}^2 \right) \leq 1$.
\end{itemize} Then the sum $S_n := \sum_{i=1}^{k_n} Y_{n,i}$ converges in distribution to $\mathcal{N}(0,1)$.
\end{thm}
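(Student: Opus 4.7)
The plan is to generalise the Fourier-analytic argument of \thref{thm:noAddKTuple} to handle the case where $\Psi$ contains an additive $k$-tuple, by working on $\F_p^n$ for large $n$ and applying McLeish's martingale CLT. If $\Psi$ does not contain an additive $k$-tuple, \thref{thm:noAddKTuple} suffices; so assume from now on that it does.

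I would work on $\F_p^n$ for a large parameter $n$ and choose a growing family of linearly independent pairs $(r^{(i)}, s^{(i)})_{i=1}^N$ in $(\F_p^n)^2$ in sufficiently generic position---for instance, each pair supported in a distinct two-dimensional coordinate block of $\F_p^n$, with $n \geq 2N$. Construct a random function $F : \F_p^n \to [-1,1]$ with $\E F = 0$ by defining its Fourier transform to have support on $\bigcup_i (M(r^{(i)}, s^{(i)}) \cup -M(r^{(i)}, s^{(i)}))$, with independent random phases $\xi_{i,j}$ at the frequencies $h^{(i)}_j \in M(r^{(i)}, s^{(i)})$ (and the conjugate phases at the negatives), and an appropriate normalising factor ensuring $\|F\|_\infty \leq 1$.

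Using the Fourier inversion formula (\ref{e:tpsi-fi}), I would decompose $T_\Psi(F) = \sum_{r,s} \prod_j \hat F(a_j r + b_j s)$ into contributions indexed by $(r,s) \in (\F_p^n)^2$. By the genericity of the chosen pairs, the nonvanishing contributions are of two types: (I) \emph{diagonal} contributions from $(r,s) = \pm(r^{(i)}, s^{(i)})$, which form a sum $S_N$ of $N$ approximately independent mean-zero random variables; and (II) \emph{degenerate} contributions from $(r, \lambda r)$ for those $\lambda \in \F_p$ which induce a cancelling partition of the coefficients $(a_j + \lambda b_j)_j$---these exist precisely because $\Psi$ contains an additive $k$-tuple, and produce a deterministic nonnegative bias $\mu_N$. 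Writing $T_\Psi(F) = S_N + \mu_N$, I would view the summands of $S_N$ as a martingale difference array under the filtration which reveals the phases block-by-block, and verify the hypotheses (M1), (M2) of \thref{mcleish}: (M1) follows from uniform boundedness of each summand, and (M2) from a direct fourth-moment computation exploiting independence of phases across blocks. McLeish's theorem then gives $S_N/\sqrt{\Var(S_N)} \Rightarrow \mathcal{N}(0,1)$.

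It follows that $\mathbb{P}(T_\Psi(F) < 0) \to \Phi\bigl(-\mu_N/\sqrt{\Var(S_N)}\bigr)$, which is positive so long as the ratio $\mu_N/\sqrt{\Var(S_N)}$ stays bounded as $N \to \infty$. The main obstacle---and the crux of the proof---is to design the random construction so that this ratio is indeed bounded: with a naive iid-phase construction of scale $\sim 1/N$ one finds that $\mu_N$ is of strictly larger order than $\sqrt{\Var(S_N)}$, which would make the limiting probability vanish. Overcoming this requires a more refined construction---for instance a sparse or structured random support, a carefully matched scaling between the number of blocks and the Fourier coefficient magnitudes, or a reweighting that partially cancels the degenerate contributions---so that the Gaussian fluctuations of $S_N$ are competitive with the deterministic bias $\mu_N$ arising from the additive $k$-tuple structure. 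Given such a construction, taking $N$ large enough produces a realisation with $T_\Psi(f) < 0$ as required.
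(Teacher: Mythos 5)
Your proposal does not address the statement in question. The statement is McLeish's martingale central limit theorem: an abstract result in probability theory asserting that a variance-normalised martingale difference array satisfying the Lindeberg-type condition (M1) and the cross-moment condition (M2) has partial sums converging in distribution to $\mathcal{N}(0,1)$. A proof of this would proceed via characteristic functions --- typically by analysing the product $\prod_i(1+i\lambda Y_{n,i})$, comparing $\Ex\,e^{i\lambda S_n}$ with $e^{-\lambda^2/2}$ through a Taylor expansion of the logarithm, and using (M1) and (M2) to control the error terms. (The paper itself does not prove this theorem; it is quoted from McLeish~\cite[Corollary~2.13]{M74}.) Nothing in your write-up engages with any of this.

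What you have instead sketched is an application of McLeish's theorem, namely an argument in the spirit of the paper's proof of \thref{thm:main-2xk-uncommon}: construct a random function $F$ on $\F_p^n$ via random Fourier phases, split $T_\Psi(F)$ into a fluctuating part and a degenerate part, and invoke the CLT on the fluctuating part. Using the theorem as a black box cannot constitute a proof of the theorem. Moreover, even judged as a proof of \thref{thm:main-2xk-uncommon}, your sketch concedes its own central gap: you note that with your sparse block construction the deterministic bias $\mu_N$ dominates $\sqrt{\Var(S_N)}$ and that ``overcoming this requires a more refined construction,'' which you do not supply. (For reference, the paper resolves this by placing independent unimodular phases at \emph{all} nonzero frequencies, so that the degenerate contribution $Q(\hat F)$ is $O(p^n)$ while $\Var P(\hat F)=\Omega(p^{2n})$, making the bias negligible against the Gaussian fluctuations.) But the primary issue remains that you have proved the wrong statement: the task was to prove the CLT itself, not to use it.
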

 We remark that {\em normalised by variance} here means (the obvious) that the random variables are normalised so that the variance of $\sum_{i=1}^{k_n} Y_{n,i}$ tends to 1 as $n\to \infty.$

\begin{proof}[Proof of \thref{thm:main-2xk-uncommon}]
We start with a brief note that this theorem is straightforward to prove when $k$ is odd. Indeed, it is easily checked that there exists a function $f : \F_p^n \to [-1,1]$ with $\E f = 0$ such that $T_\Psi(f) \neq 0$, by using the assumption $s(\Psi)=k-1$, or, equivalently, the fact that all $2\times 2$ minors of $M_{\Psi}$ have full rank. If $T(f)>0$ then $T(-f)<0,$ using that the number of variables is odd. We assume from now that $k$ is even. 

As we did in the previous theorem, we determine $f$ by specifying its Fourier coefficients. Most of them will be chosen randomly and we will let $F$ be this random function. For the rest of this proof, it will be convenient to have a total ordering on $\F_p^n$, and a notion of positive and negative elements. Exactly how this is done is not so important, but we will proceed as follows for concreteness. We identify each element of $\F_p^n$ with an element of $[ -(p-1)/2, (p-1)/2]^n$ in the natural way. Each coordinate then has an ordering inherited from the integers, and we will imbue $\F_p^n$ with the corresponding lexicographical ordering; we will denote it by $\preceq$. For $h \in \F_p^n$, we will also define the absolute value notation 
\[ |h| :=\begin{cases}
      h & \text{if }  h  \succeq 0\\
      -h & \text{if } h \prec 0
\end{cases}.\]
We note that $h  \succeq 0$ if and only if the first non-zero coordinate of $h$ is $\ge 0$ (as a natural number). 
For each $h \succ 0$, let $\hat F(h) := \xi_h$, where each $\xi_h$ is chosen uniformly at random (and independently from previous choices) on $S^1 \subset \C$, and let $\hat F(-h):= \overline{\hat F(h)}$. Let $\hat F(0)=0$ deterministically.
Similarly to the proof of \thref{thm:noAddKTuple} we note that $\| F\|_\infty \leq p^n$, by Fourier inversion and the triangle inequality, and that $F$ is real-valued, so indeed $F$ takes values in $[-p^n,p^n]$. 

Let us define 
\begin{align}
    P(\hat F) &= \sum_{r,s \text{ l.i.}} \prod_{i=1}^k \hat F(a_i r + b_i s), \\
    \text{and } Q(\hat  F) &= \sum_{r,s \text{ not l.i.}} \prod_{i=1}^k \hat F(a_i r + b_i s),\nonumber 
\end{align}
where the sums are over $r,s\in\F_p^n$ and ``l.i.'' stands for linearly independent. We note immediately that $|Q(\hat F)|\le p^{n+1}+p^n-p =O(p^n)$, where we recall that all asymptotic notation is  written with respect to $n\to \infty$. 

Then, by Fourier inversion, we have that 
\begin{equation}\label{e:scnd-tpsi-fi} T_\Psi(F) = \sum_{r,s \in \F_p^n} \prod_{i=1}^k \hat F(a_i r + b_i s) =  P(\hat F) + Q(\hat  F).
\end{equation}

\begin{lem}\thlabel{aux:P(F)tendsToNormal} For the random function $F$ we have 
$\Ex_FP(\hat F) = 0$, $\Var P(\hat F) =\Omega( p^{2n})$, and 
$P(\hat F)/\sqrt{\Var P(\hat F)}\xrightarrow{d} \mathcal{N}(0,1)$. 
\end{lem}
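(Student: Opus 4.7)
The plan proceeds in three components corresponding to the three claims.

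\textbf{Expectation.} For any linearly independent $(r,s)$, the non-singularity of every $2\times 2$ minor of $M_\Psi$ (equivalent to $s(\Psi)=k-1$) ensures that the $2k$ vectors $\pm v_1,\ldots,\pm v_k$, with $v_i := a_i r + b_i s$, are distinct and nonzero. Writing $\hat F(v_i) = \xi_{|v_i|}^{\varepsilon_i}$ with $\varepsilon_i \in \{\pm 1\}$ determined by the sign of $v_i$, the product $\prod_i \hat F(v_i)$ is a monomial in $k$ distinct, mutually independent random variables uniform on $S^1$, hence has mean zero. Summing over linearly independent pairs gives $\Ex_F P(\hat F) = 0$.

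\textbf{Variance.} Expanding,
\[ \Ex_F|P(\hat F)|^2 = \sum_{(r,s),(r',s')} \Ex_F \prod_{i=1}^k \hat F(v_i)\,\overline{\hat F(v_i')}, \]
with both sums over linearly independent pairs and $v_i' := a_i r' + b_i s'$. The summand is nonzero in expectation iff the combined multiset $\{v_1,\ldots,v_k\}\cup\{-v_1',\ldots,-v_k'\}$ partitions into pairs summing to zero. Since neither half contains any such pair internally (by the previous paragraph), this forces $\{v_i\} = \{v_i'\}$ as multisets. The diagonal $(r,s) = (r',s')$ contributes $1$ for each of the $p^{2n}-O(p^n)$ linearly independent pairs, giving $\Var P(\hat F) \geq \Omega(p^{2n})$ as required. (Off-diagonal matchings correspond to nontrivial permutations $\sigma\in S_k$ with $a_{\sigma(i)}r' + b_{\sigma(i)}s' = a_i r + b_i s$; the rank condition pins down $(r',s')$ in terms of $(r,s)$ from any two of these equations and forces $k-2$ additional constraints, limiting the off-diagonal contributions to lower order and in fact yielding $\Var P(\hat F) = \Theta(p^{2n})$.)

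\textbf{Martingale decomposition.} For the asymptotic normality, enumerate $\{h\in\F_p^n : h \succ 0\}$ as $h_1,\ldots,h_N$ with $N=(p^n-1)/2$ in an arbitrary fixed order, and set $\mathcal F_j := \sigma(\xi_{h_1},\ldots,\xi_{h_j})$. Define
\[ D_j := \Ex_F[P(\hat F) \mid \mathcal F_j] - \Ex_F[P(\hat F) \mid \mathcal F_{j-1}], \]
so $P(\hat F) = \sum_{j=1}^N D_j$ is a martingale-difference decomposition. By independence of the $\xi_h$'s together with the expectation analysis, a summand $\prod_i\hat F(v_i)$ of $P(\hat F)$ is $\mathcal F_j$-measurable exactly when $\{|v_1|,\ldots,|v_k|\} \subseteq \{h_1,\ldots,h_j\}$ and has zero conditional expectation otherwise. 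Since the $|v_i|$ are distinct for each linearly independent $(r,s)$, this yields the explicit form $D_j = \xi_{h_j}\alpha_j + \overline{\xi_{h_j}\alpha_j}$ with $\alpha_j$ measurable with respect to $\mathcal F_{j-1}$; in particular each $D_j$ is real. Setting $Y_{n,j} := D_j/\sqrt{\Var P(\hat F)}$ and applying \thref{mcleish} would then give the result, provided the two conditions (M1) and (M2) hold.

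\textbf{Main obstacle.} Condition (M1) should follow from the deterministic bound $|D_j| = O(p^n)$ combined with the fact that for a typical $j$, $|D_j|$ is of size $p^{n/2}$, ensuring $\max_j |Y_{n,j}| \to 0$. The bulk of the work lies in verifying (M2): upon expanding $D_i^2 D_j^2$ in the uniform phases $\xi_h$ and integrating them out using independence, the cross fourth-moments become combinatorial counts of quadruples of linearly independent pairs whose joint Fourier arguments satisfy a layered cancellation condition compatible with the ordering $h_1,\ldots,h_N$. As in the variance calculation, the rank-$2$ structure of $\Psi$ and the hypothesis $s(\Psi) = k-1$ restrict these configurations; the challenge is to show that only those corresponding to the expected Gaussian pairing survive asymptotically, yielding $\sum_{i\ne j}\Ex[Y_{n,i}^2 Y_{n,j}^2] \to 1$ and hence the desired convergence in distribution.
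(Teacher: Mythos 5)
Your computation of the expectation and the lower bound on the variance are correct, and your martingale decomposition is in fact identical to the paper's: the Doob increment $D_j = \Ex[P\mid\mathcal F_j] - \Ex[P\mid\mathcal F_{j-1}]$ collects exactly those summands $\prod_i\hat F(a_ir+b_is)$ for which $h_j$ is the largest element of $|M(r,s)|$, which is how the paper defines its $X_h$. So the structural set-up matches. However, the proof is incomplete where it matters most, and the sketch you offer for (M1) does not work.

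For (M1), the deterministic bound $|D_j| = O(p^n)$ together with $\sqrt{\Var P(\hat F)} = \Theta(p^n)$ gives only $\max_j|Y_{n,j}| = O(1)$, not $o(1)$; so the indicator $1_{|Y_{n,j}|>\varepsilon}$ does not vanish for small $\varepsilon$, and the Lindeberg sum does not obviously go to zero. The heuristic that ``for a typical $j$, $|D_j|\approx p^{n/2}$'' is a statement about second moments and says nothing about the tail behaviour that (M1) controls. What is actually needed (and what the paper does) is the fourth-moment bound $\sum_h \Ex(\tilde X_h^4) \to 0$, via Chebyshev $\Ex(Y^2 1_{|Y|>\varepsilon}) \leq \varepsilon^{-2}\Ex(Y^4)$; and establishing that fourth-moment bound requires a nontrivial combinatorial count (the paper sets up an auxiliary $4$-partite graph whose perfect matchings encode the cancelling partitions, and uses Observation~\thref{c:second} to bound the number of contributing quadruples by $O(p^{3n})$). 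You have neither this reduction nor the count.

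For (M2), you correctly identify the quantity that must be bounded and note that the cross fourth-moments reduce to a combinatorial condition, but you do not carry out the argument. The paper's treatment separates the sum into terms where the four argument-multisets are disjoint in absolute value (which reproduce $V^2$) and the remaining terms, and then uses a second, more involved graph-matching count together with \thref{c:second} to show the latter contribute $o(p^{4n})$. This is the bulk of the lemma's proof and is entirely absent from your proposal. (Minor: McLeish's (M2) requires $\limsup \le 1$, not convergence to $1$.) So while your decomposition and easy computations are right and align with the paper, the two verification steps that constitute the real content of the lemma are missing, and the proposed route to (M1) is not salvageable as stated.
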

Let us first argue how the lemma implies the theorem. Since $Q(\hat F)=O(p^n),$ there is a constant $c>0$ such that $Q(\hat F)/\sqrt{\Var P(\hat F)}< c$ for $n$ large enough. 
Then, denoting by $\Phi$ the CDF of the standard normal distribution, we have 
\[\mathbb{P}(T_\Psi(F) < 0) \geq \mathbb{P} \left( \frac{P(\hat F)}{\sqrt{\Var P(\hat F)}} < -c\right) > \Phi(-c)/2 > 0\]
for $n$ sufficiently large. For this $n$, we then let $f_0:\F_p^n \to [-p^n, p^n]$ be such a function with $T_\Psi(f_0) < 0$. The theorem follows by setting $f= f_0/p^n$.

\begin{proof}[Proof of \thref{aux:P(F)tendsToNormal}]
    First note that, for $r,s$ linearly independent and by the condition on the $2\times 2$ minors of $M_\Psi$, that $|a_ir + b_i s|$ are distinct for $i \in [k]$. Thus we have that $\Ex_FP(\hat F) = 0$. 

Let us now set up a martingale difference array. 
As in the previous proof, we define the multiset $M(r,s) := \{ a_ir + b_is : i \in [k]\}$ for $r,s\in\F_p^n$. We will also define $$|M(r,s)| := \{ |m| : m \in M(r,s)\}.$$ As noted above, the entries of $M(r,s)$ and of $|M(r,s)|$ are in fact distinct when $r$ and $s$ are linearly independent, and furthermore $0 \not \in M(r,s)$ for any linearly independent $(r,s)$. 
Next, for $h\in \F_p^n$ with $h\succeq 0$, define the multiset \[A_h := \{M(r,s): (r,s) \text{ l.i.}, h \in M(r,s), \text{ and } h \succeq |h'| \text{ for all } h' \in M(r,s)\},\]
i.e.~those terms in $P(\hat F)$ for which $h$ is the largest entry. Finally, let
%= \sum_{M\in A_h} \left(\prod_{m \in M} F(m) + \prod_{m \in M} \bar F(m) \right) 
\[ X_h := 2\sum_{M\in A_h} \Re \left( \prod_{m \in M}  \hat F(m)\right) = \sum_{M\in A_h} \left(\prod_{m \in M} \hat F(m) + \prod_{m \in -M} \hat F(m) \right),\]
so that $P(\hat F) = \sum_{h \succ 0} X_h$, where we note that $h=0$ does not appear in the sum due to the linearly independent assumption of $r$ and $s$. 

We note that $\Ex(\hat F(h)| \{X_{h'} : 0 \prec h' \prec h\}) = \Ex(\hat F(h)) = 0$, by independence. Thus, 
$\Ex\left(X_h| \{X_{h'} : 0 \prec h' \prec h\}\right) = 0$ for every $n,$ that is, the collection $\{X_h = X_{n,h}\}$ is indeed a martingale difference array. 

Next, we have 
\[ \Var P(\hat F) = \sum_{h \succ 0} \Ex(X_h^2) = 2\sum_{h \succ 0} \sum_{M,M'\in A_h} \left(\Re \Ex  \prod_{m \in M \cup M'}\hat F(m) + \Re \Ex \prod_{m \in M \cup -M'}\hat F(m) \right),\]
where the unions are taken as multisets. Each expectation in the above is 1 if $M \cup \pm M'$ forms a cancelling partition, and 0 otherwise. Therefore, by restricting to the subsum when $M=M'$ and noting that the double sum $\sum_{h\succ 0}\sum_{M\in A_h}$ then simplifies to $\sum_{M: r,s \text{ l.i.}},$ we see that $\Var P(\hat F) = \Omega( p^{2n})$.
Let $V := \Var P(\hat F)$, let $\tilde X_h  = V^{-1/2} X_h$ so $\{\tilde X_h\}$ is a martingale difference array normalised to have variance 1, and let $\tilde P(\hat F) = V^{-1/2} P(\hat F)$. It remains to verify the conditions (M1) and (M2) in \thref{mcleish}. 

We need the following linear algebra observation in both parts. 
\begin{obs}\thlabel{c:second}
Let $(r,s)$ be linearly independent and $(r',s')$ be linearly independent. If $|M(r,s)| \cap |M(r',s')| = 1$ then $\dim \spa(r,s,r',s') \leq 3$, and if $|M(r,s)| \cap |M(r',s')| \geq 2$ then $\dim \spa(r,s,r',s') = 2$.
\end{obs}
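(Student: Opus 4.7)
The plan is to unpack the definition of $|M(r,s)|$ and translate each element of the intersection $|M(r,s)|\cap |M(r',s')|$ into a nontrivial linear relation among the four vectors $r,s,r',s'$. Specifically, if $h \in |M(r,s)|\cap |M(r',s')|$, then there exist indices $i,j\in[k]$ and a sign $\varepsilon\in\{\pm 1\}$ such that
\[ a_i r + b_i s \;=\; \varepsilon(a_j r' + b_j s'). \]
The rank condition on the $2\times 2$ minors of $M_\Psi$ guarantees that every column $(a_i,b_i)^\top$ of $M_\Psi$ is nonzero, so the coefficient vector $(a_i,b_i,-\varepsilon a_j,-\varepsilon b_j)$ of this relation is nonzero, and it is a genuine linear dependence among $r,s,r',s'$. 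This yields the first statement: one shared element produces one nontrivial relation, hence $\dim\spa(r,s,r',s') \leq 3$.

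For the second statement, I would first note — exactly as observed in the main proof for $M(r,s)$ — that linear independence of $(r',s')$ together with the $2\times 2$ minor condition forces the elements of $|M(r',s')|$ to be pairwise distinct (and similarly for $|M(r,s)|$). Consequently, two distinct elements of $|M(r,s)|\cap |M(r',s')|$ correspond to two pairs $(i_1,j_1)\ne(i_2,j_2)$ with $i_1\ne i_2$ and $j_1\ne j_2$, and signs $\varepsilon_1,\varepsilon_2\in\{\pm 1\}$, giving
\[ a_{i_\ell}r + b_{i_\ell}s \;=\; \varepsilon_\ell(a_{j_\ell}r' + b_{j_\ell}s') \qquad(\ell=1,2). \]
Viewing this as a $2\times 2$ linear system in the unknowns $r',s'$, the coefficient matrix is
\[ \begin{pmatrix} \varepsilon_1 a_{j_1} & \varepsilon_1 b_{j_1} \\ \varepsilon_2 a_{j_2} & \varepsilon_2 b_{j_2} \end{pmatrix}, \]
which is invertible because $j_1\ne j_2$ and all $2\times 2$ minors of $M_\Psi$ are nonsingular. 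Hence $r'$ and $s'$ lie in $\spa(r,s)$, giving $\spa(r,s,r',s')=\spa(r,s)$ of dimension $2$ by linear independence of $(r,s)$.

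There is really no serious obstacle here — the observation is a direct manipulation of definitions once one identifies the correct linear relations. The only point that needs a moment's care is confirming that two distinct elements of the intersection give indices that are distinct on both the $r,s$ side and the $r',s'$ side; this is precisely the reason the $2\times 2$ minor hypothesis (which was already invoked in \thref{thm:main-2xk-uncommon} to conclude that $|M(r,s)|$ has $k$ distinct elements when $(r,s)$ is linearly independent) is needed both for $(r,s)$ and $(r',s')$.
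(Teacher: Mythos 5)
Your proof is correct and follows essentially the same route as the paper's: the paper phrases it in terms of the matrix $\widetilde{M}$ (obtained from $M_\Psi$ by negating some columns depending on $(r,s)$) and observes that all its $2\times 2$ minors remain nonsingular, leaving the linear-algebraic deduction implicit, while you have simply spelled out what that deduction is — one shared element gives one nontrivial dependence (hence $\dim\le 3$), and two shared elements give a nonsingular $2\times 2$ system forcing $r',s'\in\spa(r,s)$ (hence $\dim = 2$). Your identification that distinct shared elements must come from distinct indices on both sides, via the distinctness of entries of $|M(r,s)|$, is exactly the point the paper buries in the phrase ``the claim then follows,'' so your write-up is a faithful and more explicit version of the same argument.
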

\begin{proof}
     Here we observe that for fixed $r,s$, the map $(r,s) \mapsto |M(r,s)|$ is (prior to removing the ordering of the output) given by a $2\times k$ matrix (say, $\widetilde{M}$) obtained by negating certain columns of $M_\Psi$. In particular, $\widetilde{M}$ has all of its $2\times 2$ minors of full rank. The same is of course true of the matrix obtained from $|M(r',s')|$. The claim then follows from this condition on the $2\times 2$ minors of these matrices.
\end{proof}

\medskip

\noindent
{\em Proof of (M1).}
Observe that $\sum_{h \succ 0} \Ex\left(\tilde X_h^2 1_{|\tilde X_h|> \varepsilon} \right) \leq \frac{1}{\varepsilon^2} \sum_{h \succ 0} \Ex(\tilde X_h^4)$, so it suffices to show that $\sum_{h \succ 0}\Ex(\tilde X_h^4) \to 0$ as $n\to \infty$. To this end we compute 
\[ \sum_{h \succ 0} \Ex(X_h^4) = \sum_{h \succ 0} \sum_{M_1,M_2,M_3,M_4\in A_h} \sum_{\rho \in \{-1,1\}^4} \Ex\left(  \prod_{m \in \rho(M_1\cup M_2\cup M_3\cup M_4)} \hat F(m)\right),\]
where, for example, for $\rho = (1,-1,-1,1)$ we have $\rho(M_1\cup M_2 \cup M_3 \cup M_4) = M_1 \cup -M_2 \cup -M_3 \cup M_4$. As before, each of these expectations is zero unless $\rho(\cup_{i=1}^4 M_i)$ forms a cancelling partition. This in turn implies that the elements of $\cup_{i=1}^4 |M_i|$ coincide in pairs. 
Thus, to show (M1), it suffices for us to show that 
$$ T:= \# \{(h,M_1,M_2,M_3,M_4)\mid h \succ 0, M_i \in A_h, \cup_{i=1}^4 |M_i| \text{ pairs}\} = o(p^{4n}).$$ %\am{why was this $p^{4n}$ again? Same knot as above.}

Fix now $(h,M_1,M_2,M_3,M_4)\in T$, and, for each $i\in [4]$, fix linearly independent $(r_i,s_i)$ such that 
$M_i = M(r_i,s_i),$ and let $\widetilde{M}_i$ be the matrix such that $\widetilde{M}_i(r_i,s_i) = |M(r_i,s_i)|$. 
Consider an auxiliary $4$-partite graph on $4k$ vertices, where the vertices in the $i$th part are the elements of $\widetilde{M}_i(r_i,s_i)$ for $i=1,2,3,4$; and where $uv$ is an edge if and only if $u$ and $v$ are equal as elements of $\F_p^n$. We note that there is no edge whose endpoints lie in the same part, again since the elements of any one $\widetilde{M}_i(r_i,s_i)$ are distinct. An illustration can be found in Figure~\ref{fig:1}. 
\begin{figure}[b]%[width=0.45\textwidth]
    \centering
        \includegraphics[page=4,width=.45\textwidth]{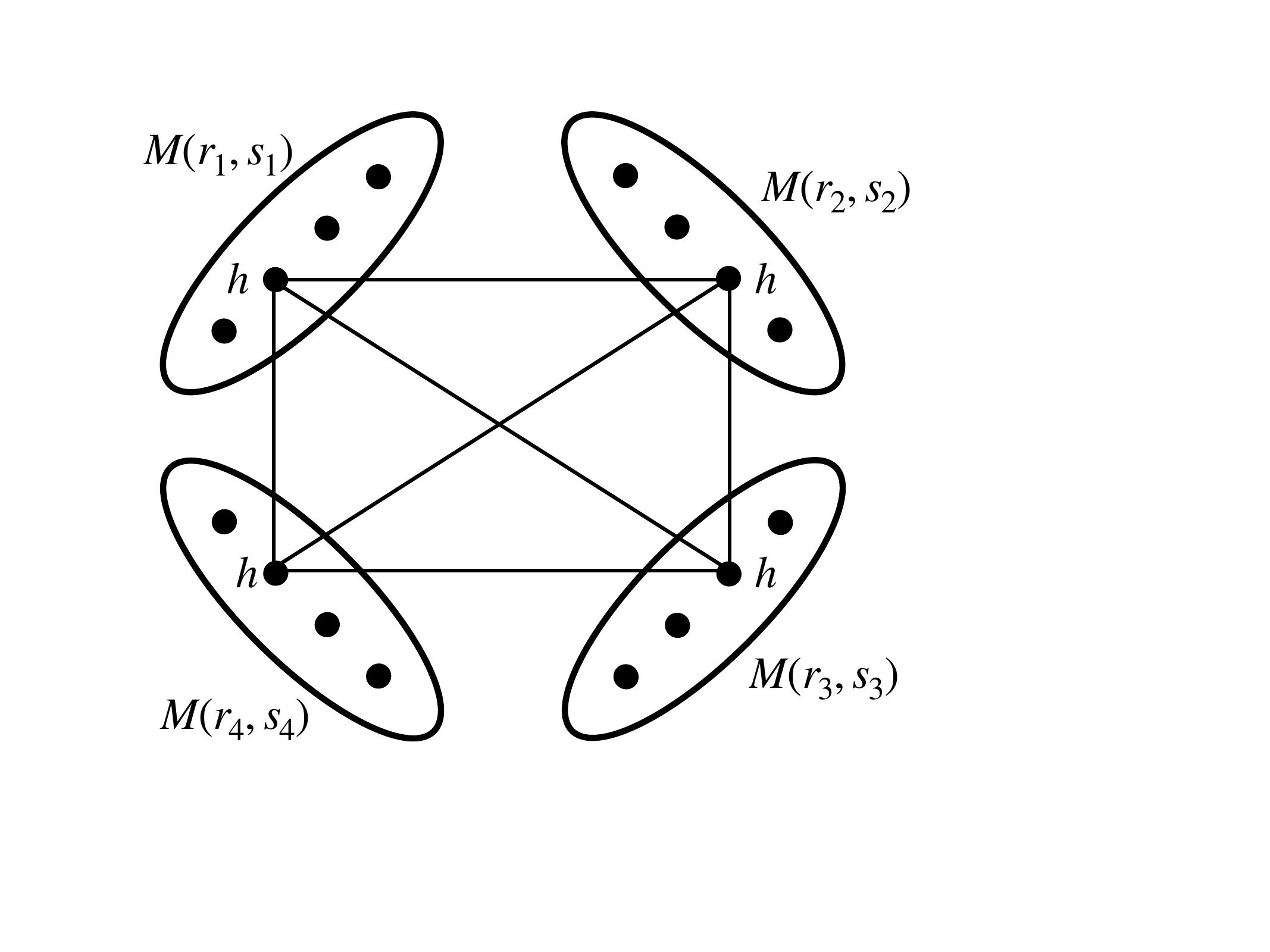}
        \caption{An illustration of the auxiliary graph used to prove (M1). An edge $uv$ is included if the two elements $u=a_ir_1+b_is_1$ and $v=a_jr_2+b_js_2$ are equal as elements of $\F_p^n$.}
        \label{fig:1}
%    \caption{Examples of networks modelled by graphs}\label{fig:animals}
\end{figure}
The condition of $(h,M_1,M_2,M_3,M_4)$ being an element of $T$ implies that this graph must have a perfect matching. We claim that the number of such tuples is $O(p^{3n})$. 

Suppose such a graph $G$ has a perfect matching. We know that $G$ contains a copy of $K_4$ since $h\in \widetilde{M}_i(r_i,s_i)$ for each $i$. This, together with the fact that each part is independent, and that there is a perfect matching implies that there is a partition of the four parts into pairs such that each pair has at least two edges between them; without loss of generality, say the pairs are $\{1,2\}$ and $\{3,4\}$.

Fix choices of $(r_1,s_1)$ in at most $p^{2n}$ ways. We have $h \in \widetilde{M}_1(r_1,s_1)$ so there are $k=O(1)$ choices for $h$, and by \thref{c:second}, there are at most $p^2 = O(1)$ choices for $(r_2,s_2)$ since both lie in the span of $(r_1,s_1)$. Since part 2 and part 3 share an edge (corresponding to $h$), there are $O(p^n)$ choices for $(r_3,s_3)$, and since there are at least two edges between part 3 and part 4, there are subsequently $O(1)$ choices for $(r_4,s_4)$. In total, there are $O(p^{3n})$ choices, which completes the argument for (M1).

\medskip

\noindent
{\em Proof of (M2).} We have to prove that $\limsup_n \sum_{\substack{h_1\ne h_2\\ h_1,h_2 \succ 0}} \Ex(\tilde X_{h_1}^2\tilde X_{h_2}^2) \leq 1$.   
We have that 
\[\sum_{\substack{h_1\ne h_2\\ h_1,h_2 \succ 0}} \Ex(X_{h_1}^2X_{h_2}^2) 
= \sum_{\substack{h_1\ne h_2\\ h_1,h_2 \succ 0}} \sum_{M_1,M_1' \in A_{h_1}}  \sum_{M_2,M_2' \in A_{h_2}} \sum_{\rho\in\{\pm1\}^4}
\Ex\left(  \prod_{m \in \rho(M_1\cup M_1'\cup M_2\cup M_2')} \hat F(m)\right).\]
If $(|M_1|\cup |M_1'|)\cap (|M_2| \cup |M_2'|) = \emptyset$, then by independence we have
\[\Ex\left(  \prod_{m \in \rho(M_1\cup M_1'\cup M_2\cup M_2')} \hat F(m)\right) 
= \Ex\left(  \prod_{m \in \rho(M_1\cup M_1')} \hat F(m)\right) \Ex\left(  \prod_{m \in \rho(M_2\cup M_2')} \hat F(m)\right).\]
Inserting extra (nonnegative) terms into the sum as appropriate, we then have that 
\[\sum_{\substack{h_1\ne h_2\\ h_1,h_2 \succ 0}} \Ex(X_{h_1}^2X_{h_2}^2) \leq \left(\sum_{h\succ 0} \Ex(X_h^2)\right)^2 + \sum_{\substack{h_1\ne h_2\\ h_1,h_2 \succ 0}} \sum_{M_1,M_1',M_2,M_2'}   \Ex\left(  \prod_{m \in \rho(M_1\cup M_1'\cup M_2\cup M_2')} \hat F(m)\right),\]
where the second sum is now over $M_1,M_1' \in A_{h_1}$ and $M_2,M_2'\in A_{h_2}$ such that $(|M_1|\cup |M_1'|)\cap (|M_2| \cup |M_2'|) \ne \emptyset$. Dividing the above inequality through by $V^2$, we see that to show (M2) and complete the proof, it suffices to show that the double sum above is of size $o(p^{4n})$.

\begin{figure}[b]%{0.66\textwidth}
    \centering
    \begin{subfigure}[t]{0.45\textwidth}
    \centering
        \includegraphics[page=5,width=.98\textwidth]{pictures-2xk-systems}
        \caption{Figure 2}
        \label{fig:2}
    \end{subfigure}
    \hfill 
    %~ 
    \begin{subfigure}[t]{0.45\textwidth}
        \includegraphics[page=6,width=.96\textwidth]{pictures-2xk-systems}
        \caption{Figure 3}
        \label{fig:3}
    \end{subfigure}
\end{figure}
We now analyse the summands for which we get a nonzero contribution to the double sum. We have $h_1 \ne h_2$ and $0\prec h_1,h_2$; assume without loss of generality that $h_1 \prec h_2$. Recall that $h_1$ is maximal over elements of $|M_1|,|M_1'|$, and so we have that $h_2 \not \in |M_1|\cup |M_1'|$. Set up a $4$-partite graph as we did for the proof of (M1). We firstly deal with the (easier) case that $h_1 \in |M_2|\cup |M_2'|$. Note then that $h_1\in |M_2|\cap |M_2'|$ since it must occur an even number of times in the graph. An illustration is provided in Figure~2. %~\ref{fig:M2}\eqref{fig:2}. 
First choose $h_1, h_2$ in at most $p^{2n}$ ways. There are then $O(1)$ choices for $(r_2,s_2,r_2',s_2')$. Choose an element of $|M_1|$ that is not $h_1$ in $O(p^n)$ ways. There are then $O(1)$ ways to complete $|M_1|$. By the pigeonhole principle, the part corresponding to $|M_1'|$ must have at least two edges in common with at least one other part, and so there are $O(1)$ ways to determine $|M_1'|$. This completes the proof in this case.

In the case that $h_1 \not \in |M_2|\cap |M_2'|$, we have some other element $a \in (|M_1|\cup |M_1'|)\cap (|M_2| \cup |M_2'|)$ where $a \ne h_1,h_2$. See Figure~3
%\ref{fig:3} 
for an illustration. There are at most $p^{2n}$ choices for $(r_1,s_1)$. By the pigeonhole principle again, there is another part $|M^{(2)}|$ which has at least two edges in common with that of $|M_1|$. We may then determine the $(r,s)$ which generate $M^{(2)}$ in $O(1)$ ways. Thanks to the existence of $a\ne h_1,h_2$ established above there is at least one edge between the parts of $|M_1|$ and $|M^{(2)}|$ and the remaining parts. Let $|M^{(3)}| \not \in \{|M_1|, |M^{(2)}|\}$ be the remaining part which is incident to this edge. Then there are $O(p^n)$ ways to determine $|M^{(3)}|$. Finally, there are $O(1)$ ways to determine the final part since, by the pigeonhole principle, it must have two edges in common with one of the previous parts. Thus there are $O(p^{3n})$ ways to obtain such a matching, so the contribution from the double sum above is $o(p^{4n})$, which completes the proof of (M2) and the proof of \thref{aux:P(F)tendsToNormal}. 
\end{proof}
As shown above, \thref{aux:P(F)tendsToNormal} implies the theorem. 
\end{proof}

We will need the following lemma to prove Theorem \ref{thm:main}.
\begin{lem}\label{l:one-bigger}
Let $\Psi_1, \ldots, \Psi_l$ be a sequence of linear patterns which are distinct in the sense that the operators $T_{\Psi_1}, \ldots, T_{\Psi_l}$ are distinct.\footnote{That is, the matrices corresponding to $\Psi_i$ are distinct up to row operations, and permutations of columns.} Then there exists $f:\F_p^n \to [-1,1]$ and some $i \in [l]$ such that $|T_{\Psi_i}(f)|>|T_{\Psi_j}(f)|$ for all $j\ne i$.
\end{lem}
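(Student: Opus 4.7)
The plan is to produce a single function $f : \F_p^n \to [-1,1]$ at which the absolute values $|T_{\Psi_1}(f)|, \ldots, |T_{\Psi_l}(f)|$ are pairwise distinct; the desired index $i$ is then the one realising the maximum, and the strict inequality $|T_{\Psi_i}(f)| > |T_{\Psi_j}(f)|$ for all $j\ne i$ is automatic.

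The key observation is that, for any fixed $n$, each $T_{\Psi_j}$ is a polynomial in the $p^n$ real variables $\{f(x) : x \in \F_p^n\}$, namely an average over the solution set of a product of $t_j$ values of $f$. For each pair $i \neq j$ I would consider the polynomial
\[ P_{ij}(f) := T_{\Psi_i}(f)^2 - T_{\Psi_j}(f)^2 = \bigl(T_{\Psi_i}(f) - T_{\Psi_j}(f)\bigr)\bigl(T_{\Psi_i}(f) + T_{\Psi_j}(f)\bigr). \]
The first factor is a nonzero polynomial because, by hypothesis, $T_{\Psi_i}$ and $T_{\Psi_j}$ are distinct operators (taking $n$ sufficiently large so that this operator-level distinctness is visible at scale $n$). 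The second factor is nonzero as a polynomial because it evaluates to $T_{\Psi_i}(1) + T_{\Psi_j}(1) = 1 + 1 = 2$ at the constant function $f\equiv 1$. Hence each $P_{ij}$ is a nonzero real polynomial in the variables $\{f(x)\}_{x\in\F_p^n}$.

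The concluding step is standard: a nonzero real polynomial on $\R^{p^n}$ vanishes only on a set of Lebesgue measure zero, so the union of the zero sets of the finitely many $P_{ij}$ ($i<j$) has measure zero in the cube $[-1,1]^{p^n}$. Any $f$ in the complement satisfies $|T_{\Psi_i}(f)| \neq |T_{\Psi_j}(f)|$ for every pair $i\neq j$, and letting $i$ be the index realising $\max_j |T_{\Psi_j}(f)|$ completes the proof. I do not anticipate a serious obstacle; the only subtlety is arranging that $n$ is large enough for the distinctness of the $T_{\Psi_j}$ as operators to translate into distinctness as polynomials in the $f$-values, which is automatic once the sizes of the systems $\Psi_j$ are fixed.
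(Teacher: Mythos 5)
Your proposal is correct and follows essentially the same strategy as the paper: view each $T_{\Psi_j}$ as a polynomial in the variables $\{f(x)\}_{x\in\F_p^n}$, observe the relevant difference polynomials are nonzero, and take $f$ outside the measure-zero union of their zero sets. One small but genuine improvement over the paper's own sketch: the paper only explicitly excludes the sets where $T_{\Psi_j}(f)=T_{\Psi_{j'}}(f)$, which guarantees distinct \emph{values} but not distinct \emph{absolute values} (one could a priori have $T_{\Psi_i}(f)=-T_{\Psi_j}(f)$); your use of $P_{ij}=T_{\Psi_i}^2-T_{\Psi_j}^2$, together with the observation that $T_{\Psi_i}+T_{\Psi_j}$ is nonzero since it equals $2$ at the constant function $1$, closes this gap cleanly.
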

\begin{proof}
View each $T_{\Psi_j}$ as a polynomial which maps $[-1,1]^{\F_p^n}$ to  $[-1,1]$. Since the operators $T_{\Psi_j}$ are distinct, the corresponding polynomials are distinct, and the equation $T_{\Psi_j}(f) = T_{\Psi_j'}(f)$ for $j\ne j'$ applies only to a measure 0 set of $f$. It suffices to find $f$ which avoids the union of all such equations; this set of $f$ has full measure.
\end{proof}

We are ready to prove our main theorem. 

\begin{proof}[Proof of \thref{thm:main}]
In this proof we will have two parameters $A,B \in \mathbb{N}^+$ whose values will be chosen later; we will first choose $A$ to be large depending on $\Psi, \Psi'$ and then choose $B$ to be large depending on $A, \Psi, \Psi'$. 

We wish to produce a function $f:\F_p^n \to [-1/2,1/2]$ with $\E f = 0$ such that 
\[T_{\Psi'}\left(\frac{1}{2} + f\right) + T_{\Psi'}\left(\frac{1}{2} - f\right) < 2^{1-t}.\]
By the usual multilinear expansion we have 
\[T_{\Psi'}(\frac{1}{2} + f) + T_{\Psi'}(\frac{1}{2} - f) = 2^{1-t} + \sum_{\substack{S \subset [t] \\ |S| \text{ even}, |S| \geq k }} 2^{1-t + |S|} T_{\Psi'|_S}(f),\]
where we have used the fact that $s(\Psi') = k-1$. 
Consider the set $\mathcal S$ of 
$$\{\Psi'|_S :  S \subset [t], |S| = k, \deg(\Psi'|_S)= k-2\},$$
where $\deg(\Psi'|_S)$ denotes the degree of freedom, or the dimension of the solution space, of the system $\Psi|_S$. 
Note that $\Psi \in \mathcal S$ so in particular $\mathcal S$ is nonempty, and furthermore that each $\tilde \Psi \in \mathcal S$ satisfies $s(\tilde \Psi) = k-1$. Invoke Lemma \ref{l:one-bigger} on the set of (distinct) forms in $\mathcal S$; let $f_0$ be the function that lemma produces and let $\hat \Psi$ be the linear pattern that lemma produces. Let $f_1$ be the function produced by Theorem \ref{thm:main-2xk-uncommon} for the pattern $\hat \Psi$ (so $T_{\hat \Psi}(f_1) < 0$). Let $f_2 := f_0^{\otimes A} \otimes f_1$. Recall then that $T_{\Psi'|_S}(f_2) = T_{\Psi'|_S}(f_0)^A T_{\Psi'|_S}(f_1)$ for all $S$. Then choose $A$ to be sufficiently large (and even if $T_{\hat \Psi}(f_0) < 0$) so that 
\[ \sum_{S \in \mathcal{S}} T_{\Psi'|S}(f_2) < 0.\]

Next, we claim that amongst all $S\subset [t]$ such that $|S|$ even and $|S| \geq k$, the set of $S$ for which $\deg(\Psi'|_S) + |S|$ is minimal is precisely $\mathcal S$. To this end we claim that $\deg(\Psi'|_S) \geq k-2$ since $s(\Psi') = k-1$. Indeed, if $\deg(\Psi'|_S) \leq k-3$, then restrict to these columns and we get a linear relation, so a contribution from something smaller than $k-1$). Thus $\deg(\Psi'|_S) + |S| \geq  |S| + k  - 2$, so the minimisers of $\deg(\Psi'|_S) + |S|$ must certainly come from $S$ with $|S| = k$. Furthermore, amongst these, $\codim \Psi'|_S \in \{1,2\}$, so $\deg(\Psi'|_S) + |S|$  is minimised by precisely those $S$ with $|S| = k$ and codimension 2; this is exactly the set $\mathcal{S}$, proving the claim.

Let $1_0 : \F_p^B \to \{0,1\}$ be the characteristic function of zero. Let $f_3 = (p^{-B}f_2)\otimes 1_0$. Then $T_{\Psi'|_S}(f_3) = p^{-B(|S| + D(\Psi'|_S))} T_{\Psi'|_S}(f_2)$. Setting $B$ large enough we can conclude. 
\end{proof}

\bibliographystyle{abbrv}

\bibliography{sid2}

\end{document}